\newtheorem{theorem}{Theorem}[section]
\newtheorem{lemma}[theorem]{Lemma}
\newtheorem{proposition}[theorem]{Proposition}
\theoremstyle{definition}
\newtheorem{definition}[theorem]{Definition}
\newtheorem{remark}[theorem]{Remark}
\newtheorem{example}[theorem]{Example}
\numberwithin{equation}{section}
\newcommand{\diam}{\mathrm{diam}}
\begin{document}


\baselineskip=17pt



\title[On $s$-sets in spaces of homogeneous type]{On $s$-sets in spaces of homogeneous type}

\author[M. Carena]{Marilina Carena}
\address{Instituto de Matem\'atica Aplicada del Litoral
(CONICET-UNL), Departamento de Matem\'{a}tica (FHUC-UNL),  Santa Fe,
Argentina.} \email{mcarena@santafe-conicet.gov.ar}
\author[M. Toschi]{Marisa Toschi}
\address{Instituto de Matem\'atica Aplicada del Litoral
(CONICET-UNL), Departamento de Matem\'atica (FIQ-UNL), Santa Fe,
Argentina.} \email{mtoschi@santafe-conicet.gov.ar}

\date{}


\subjclass[2010]{Primary  28A05; Secondary 28A78}

\keywords{space of homogeneous type, $s$-sets, Hausdorff measure}

\maketitle

\begin{abstract}
Let $(X,d,\mu)$ be a space of homogeneous type. In this note we study the relationship between
two types of $s$-sets:  relative to a distance  and relative to a measure. We find  a condition on a closed subset $F$
of $X$ under which we have that $F$ is $s$-set relative to the measure $\mu$ if and only if $F$ is $s$-set relative to $\delta$.
Here $\delta$ denotes the quasi-distance  defined by Mac\'ias and Segovia such that $(X,\delta,\mu)$ is a normal space.
In order to prove  this result,
 we show a covering type lemma and
 a type of Hausdorff measure based criteria for the $s$-set condition relative to $\mu$  of a given set.
\end{abstract}

\section{Introduction, notation and definitions}

A \emph{\textbf{quasi-metric}} on a set  $X$ is  a non-negative
 function  $d$ defined on $X\times X$ satisfying the following properties:
 \begin{enumerate}
\item $d(x,y)=0$ if and only if $x=y$;
\item $d(x,y)=d(y,x)$ for every $x,y\in X$;
\item there exists a constant
$K\geq 1$ such that  $d(x,y)\leq K(d(x,z)+d(z,y))$, for every $x,y,z\in X$.
\end{enumerate}
We will refer to $K$ as the triangle constant for $d$.
A quasi-distance $d$ on $X$ induces a topology  through the
neighborhood system given by the family of all subsets of $X$
containing a $d$-ball $B(x,r)=\{y\in X:d(x,y)<r\}$,
$r>0$ (see \cite{CoifWeiss}). In a quasi-metric space $(X,d)$ the \emph{\textbf{diameter}} of
a subset $E$ is defined as
\[\diam(E)=\sup\{d(x,y):x,y\in E\}.\]

Throughout this paper $(X,d)$ shall be a  quasi-metric
space such that the $d$-balls are open sets. Also we shall assume that $(X,d)$ has \emph{\textbf{finite metric dimension}}.
This means that there exists a constant $N\in\mathbb{N}$ such that
 any $d$-ball $B(x,2r)$ contains at most $N$ points  of any $r$-disperse
subset of $X$. A set $U$ is said to be  \emph{$\boldsymbol{r}$\textbf{-disperse}}
if $d(x,y)\geq r$ for every $x,y \in U$, $x\neq y$. If a quasi-metric space $(X,d)$ has finite metric dimension,
  every $r$-disperse  subset of $X$ has at most $N^m$ points in each
$d$-ball of radius $2^m r$, for all $m\in\mathbb N$ and every $r>0$ (see
\cite{CoifWeiss} and \cite{Asso}). Also it
 is well known that   every bounded subset $F$ of $X$ is totally bounded, so that for every  $r>0$ there exists a finite maximal $r$-disperse on $F$, whose cardinal depends on $\diam(F)$ and on~$r$. 

We shall say that a closed subset $F$ of $X$ is \emph{\textbf{$\boldsymbol s$-set in $\boldsymbol{(X,d)}$}} with associated measure $\nu$,
 if $\nu$ is a  Borel measure  supported on $F$ such that
  \begin{equation}\label{eq: normal}
c^{-1}r^s\leq \nu(B(x,r))\leq cr^s,
\end{equation}
for every $x\in F$ and every $0<r<\diam(F)$, for some constant $c\geq 1$. When  the above conditions hold for every $0<r<r_0$, where $r_0$ is a positive number less than $\diam(F)$, we say that $F$  is \emph{\textbf{locally $\boldsymbol s$-set in $\boldsymbol{(X,d)}$}}.
In some references related to problems of harmonic analysis and partial differential equations, see for example \cite{ACDT}, this sets are called
\emph{\textbf{(locally) $\boldsymbol s$-Ahlfors}}.
In the bibliography belonging to geometric measure theory, such as \cite{Falconer}, an $s$-set $F$ is one for which $0< \mathscr H^s(F)<\infty$ where $\mathscr H^s$ is the Hausdorff measure of dimension $s$. Nevertheless, following \cite{Sjoding} we shall adopt the expression $s$-set  to name a set that supports a measure $\nu$ for which $\nu(B(x,r))$ behaves as $r^s$ for $r$ small.

In \cite{ACDT} is proved  that the concepts of $s$-set and locally $s$-set  coincide when the set $F$ is bounded and  $(X,d)$ has finite metric dimension.

We shall  now  recall the definitions of Hausdorff measure and Hausdorff dimension of a set in a quasi-metric space $(X,d)$.
 The basic aspects related to this concepts can be found in \cite{Falconer}.
For $\rho>0$, we say that a sequence $\{B_i=B(x_i,r_i)\}$ of subsets of $X$ is a \emph{$\boldsymbol\rho$\textbf{-cover}} by $d$-balls of a set $F$ if
$F\subseteq \bigcup B_i$ and $r_i\leq
\rho$ for every $i$. Let $F\subseteq X$ and $s\geq 0$ fixed.
We define
\[\mathscr{H}^s_{\rho}(F)=\inf\left\{\sum_{i=1}^{\infty} r_i^s: \{B_i\}
\textrm{ is a $\rho$-cover by $d$-balls of } F\right\}.\]
Clearly  $\mathscr{H}^s_{\rho}(F)$ increases when  $\rho$ decreces, so that the limit  when  $\rho$ tends to $0$ exists
 (although it may be infinite). Then we define
\[\mathscr{H}^s(F)=\lim_{\rho \to 0}\mathscr{H}^s_{\rho}(F)=\sup_{\rho>0}
\mathscr{H}^s_{\rho}(F).\]
We shall refer to  $\mathscr{H}^s(F)$ as the
 \emph{\textbf{Hausdorff measure}} of~$F$. The corresponding \emph{\textbf{Hausdorff dimension}} of $F$ is defined as
 $\dim_{\mathscr{H}}(F)=\inf\{s>0: \mathscr{H}^s(F)=0\}$. It is easy to see that any $s$-set  $F$ in $(X,d)$ satisfies that
 $\dim_{\mathscr{H}}(F)=s$ (see \cite{Sjoding}).

We shall point out that, if $(F,d)$ is  (locally) $s$-set, then there exists essentially only one Borel  measure $\nu$ satisfying the condition required in the definition. This fact is known in the Euclidean setting (see for instance \cite{Triebel}), and was proved for general quasi-metric spaces in \cite{ACDT}. More precisely, is proved that if $(X,d)$ has finite metric dimension and $F$ is (locally) $s$-set in $(X,d)$ with measure $\nu$, then $F$ is (locally) $s$-set en $(X,d)$ with the restriction of  $\mathscr H^s$ to~$F$.

A sufficient condition under which a quasi-metric space $(X,d)$ has finite metric dimension is when $X$ supports a doubling measure (see \cite{CoifWeiss}).
 A Borel measure $\mu$ defined  on the $d$-balls is said to be \emph{\textbf{doubling}} if for some constant $A\geq 1$ we have the inequality
\[0<\mu(B(x,2 r))\leq A\mu(B(x,r))<\infty,\]
for every $x\in X$ and every $r>0$. When $\mu$ is a doubling measure, we say that a point $x$ in $(X,d,\mu)$ is an \emph{\textbf{atom}} if $\mu(\{x\})>0$. When $\mu(\{x\})=0$ for every $x\in X$ we say that $\mu$ is a \emph{\textbf{non-atomic}} doubling measure. Mac\'ias and Segovia proved in \cite{M-S} that a point is an atom if and only if it is topologically isolated,
 and that the set of such points is at most countable. Throughout this paper we shall say that
 $(X,d,\mu)$ is a \emph{\textbf{space of homogeneous type}} if $\mu$ is a non-atomic doubling measure on the quasi-metric space $(X,d)$.

Given a  space of homogeneous type $(X,d,\mu)$, the Hausdorff measure and the Hausdorff dimension \emph{ relative to}  $\mu$ is consider in \cite{Sjoding}. Precisely, the \textbf{\emph{Hausdorff measure relative to $\boldsymbol \mu$}} is defined as ${H}^s(F):=\lim_{\rho \to 0}{H}^s_{\rho}(F)$, where
\[
{H}^s_{\rho}(F)=\inf\left\{\sum_{i=1}^{\infty}\mu^s(B_i): F\subseteq \bigcup_i B_i \textrm{ and } \mu(B_i)\leq\rho\right\},\]
where $B_i$ are $d$-balls on $X$. Then the \textbf{\emph{Hausdorff dimension relative to $\boldsymbol \mu$}} is defined by
\[\dim_H(F)=\inf\{s>0: H^s(F)=0\}.\]

These concepts conduce to give a definition of $s$-set relative to the measure $\mu$, compatible with $H^s$.
Given a space of homogeneous type  $(X,d,\mu)$, we shall say that a closed subset $F$ of $X$ is \emph{\textbf{$\boldsymbol s$-set in $\boldsymbol{(X,d,\mu)}$}} if there exist
a constant $c\geq1 $ and
 a Borel measure $m$ supported on $F$ such that
  \begin{equation}\label{eq: normal mu}
c^{-1}\mu(B(x,r))^s\leq m(B(x,r))\leq c\mu(B(x,r))^s,
\end{equation}
for every $x\in F$ and every $0<r<\diam(F)$.
As before,  if (\ref{eq: normal mu}) holds for every $0<r<r_0$, where $r_0$ is a positive number less than $\diam(F)$, we say that $F$  is \emph{\textbf{locally $\boldsymbol s$-set in $\boldsymbol{(X,d,\mu)}$}}.

It is now easy to see that each $s$-set $F$ in $(X,d,\mu)$ satisfies $\dim_H(F)=s$.\\

Given a space of homogeneous type $(X,d,\mu)$, in \cite{Sjoding} are also considered the concepts of $s$-sets, Hausdorff measure and Hausdorff dimension relative to a particular quasi-metric $\delta$ related to $(X,d,\mu)$. This quasi-metric was constructed by Mac\'ias and Segovia in \cite{M-S},
in such a way that the new structure $(X,\delta,\mu)$
becomes a normal space (in the sense that every $\delta$-ball in $X$ has $\mu$-measure equivalent to its ratio), and the topologies induced on $X$ by $d$ and $\delta$ coincide.
This quasi-metric is defined by
\[\delta(x,y)=\inf\{\mu(B): B \textrm{ is a $d$-ball with  $x,y\in B$}\}\]
if $x\neq y$, and $\delta(x,y)=0$ if $x=y$. 
%
It will be also useful  to notice that in the proof of the above mentioned result of Mac\'ias
and Segovia it is proved that
\[B_\delta(x,r)=\bigcup\{B: B\textrm{ is a $d$-ball with  $x\in B$  and  $\mu(B)<r$}\},\]
for every $x\in X$ and every $r>0$, where $B_\delta(x,r):=\{y\in X:\delta(x,y)<r\}$ denotes
the ball in $X$ relative to $\delta$. Throughout  this paper  $\delta$ shall denote this quasi-metric.

Then, we can consider the concept of $s$-set in $(X,\delta)$, the Hausdorff measure
relative to $\delta$, and the corresponding Hausdorff dimension. More precisely, we shall denote
${G}^s(F):=\lim_{\rho \to 0}{G}^s_{\rho}(F)$, where
\[
{G}^s_{\rho}(F)=\inf\left\{\sum_{i=1}^{\infty}r_i^s: F\subseteq \bigcup_i B_\delta(x_i,r_i) \textrm{ and } r_i\leq\rho\right\},\]
and
\[\dim_G(F)=\inf\{s>0: G^s(F)=0\}.\]
In \cite[Propo. 1.5]{Sjoding} is proved that $H^s(F)$ and $G^s(F)$ are equivalent, and then $\dim_H(F)=\dim_G(F)$ for any
subset $F$ of $X$. 
In this note we explore
 the relationship between the concepts of $s$-set in $(X,d,\mu)$  and  $s$-set
in $(X,\delta)$. \\

The paper is organized as follows. Section~\ref{section: main results} contains the main results.
Theorem~\ref{Teo:s-conjunto implica mu conjunto} states that
under certain typical conditions, being $s$-set in $(X,\delta)$ is stronger
than being $s$-set in $(X,d,\mu)$. A sufficient condition under which every $s$-set in $(X,d,\mu)$ is an $s$-set in $(X,\delta)$
is contained in Theorem~\ref{teo: suficiente para s-set en (X,delta)}. We show that every bounded set satisfies this condition,
and we give examples of unbounded set satisfying it. In Proposition~\ref{propo: unicidad de la medida} we obtain a criteria to check the $s$-set condition related to $\mu$ of a given set based the Hausdorff measure.
Section~\ref{section: proof} is devoted to the proof of Proposition~\ref{propo: unicidad de la medida}, for which we state and proof a covering type lemma
of a bounded set by balls with small measure and controlled overlap (see Lemma~\ref{claim: cubriemiento por bolas de medida chica}).

\section{Main results}\label{section: main results}
Let $(X,d,\mu)$ be a given space of homogeneous type, and set $\delta$ the quasi-metric defined in previous section. We shall first prove that,
under certain condition, being $s$-set in $(X,\delta)$ is stronger
than being $s$-set in $(X,d,\mu)$.

\begin{theorem}\label{Teo:s-conjunto implica mu conjunto}\hspace*{\fill}
\begin{enumerate}
\item\label{item: s-set unbounded en delta} If $F$ is an unbounded $s$-set in $(X,\delta)$ with associated measure $\nu$,
then $F$  is  $s$-set in $(X,d,\mu)$ with the same measure $\nu$.
\item\label{item: loc s-set en delta}   If $F$ is locally $s$-set in $(X,\delta)$ with associated measure $\nu$ and $\mu(F)=0$,
then $F$  is locally $s$-set in $(X,d,\mu)$ with the same measure~$\nu$.
\end{enumerate}
\end{theorem}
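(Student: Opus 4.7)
The plan is to sandwich every $d$-ball $B(y,r)$ centered at a point $y\in F$ between two $\delta$-balls centered at the same $y$ whose radii are both comparable to $\mu(B(y,r))$, and then simply transfer the $s$-set estimate for $\nu$ from $\delta$-balls to $d$-balls. Concretely, I would aim for an inclusion of the form
\[
B_\delta\!\bigl(y,c_{1}\mu(B(y,r))\bigr)\ \subseteq\ B(y,r)\ \subseteq\ B_\delta(y,t)\quad\text{for every }t>\mu(B(y,r)),
\]
with a constant $c_{1}>0$ depending only on the quasi-metric constant $K$ and the doubling constant of $\mu$.

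The right-hand inclusion is essentially free from the identity
\[
B_\delta(y,t)=\bigcup\{B:B\text{ is a }d\text{-ball},\ y\in B,\ \mu(B)<t\}
\]
recalled in the introduction: taking $B=B(y,r)$ itself gives $B(y,r)\subseteq B_\delta(y,t)$ for every $t>\mu(B(y,r))$. The left-hand inclusion is the only real computation. If $z\in B_\delta(y,\rho)$, some $d$-ball $B(x_{0},s)$ contains both $y$ and $z$ with $\mu(B(x_{0},s))<\rho$; the quasi-triangle inequality then yields $d(y,z)<2Ks$, while the doubling of $\mu$ applied at $y\in B(x_{0},s)$ produces an estimate of the form $\mu(B(x_{0},s))\geq A^{-1}\mu(B(y,s/K))$, where $A$ depends only on the doubling constant and $K$. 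If it were the case that $s\geq r/(2K)$, one further iteration of doubling would bound $\mu(B(y,s/K))$ below by a fixed positive multiple of $\mu(B(y,r))$; hence, choosing $\rho=c_{1}\mu(B(y,r))$ with $c_{1}$ small enough in terms of these two doubling factors rules this case out and forces $d(y,z)<r$.

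Once the sandwich is in place, the $s$-set estimate $c^{-1}\rho^{s}\leq\nu(B_\delta(y,\rho))\leq c\rho^{s}$ applied to the outer and inner radii yields, after letting $t\downarrow\mu(B(y,r))$ in the outer bound,
\[
C^{-1}\mu(B(y,r))^{s}\ \leq\ \nu(B(y,r))\ \leq\ C\mu(B(y,r))^{s},
\]
which is exactly the condition for $F$ to be $s$-set in $(X,d,\mu)$ with measure $\nu$. In case (\ref{item: s-set unbounded en delta}), the $\delta$-unboundedness of $F$ makes the $s$-set bound for $\nu$ valid at every positive $\delta$-radius; moreover, since $\mu$ is locally finite on $d$-balls, a $\delta$-unbounded set is necessarily $d$-unbounded as well, so the argument above gives the desired inequalities for every $r>0$ and finishes part (\ref{item: s-set unbounded en delta}).

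The main obstacle lies in case (\ref{item: loc s-set en delta}), where the bound $c^{-1}\rho^{s}\leq \nu(B_\delta(y,\rho))\leq c\rho^{s}$ is only available for $\rho<r_{0}$. To apply the sandwich argument I must choose a single threshold $r_{0}'>0$ such that both $\mu(B(y,r))$ (plus an arbitrarily small margin) and $c_{1}\mu(B(y,r))$ remain below $r_{0}$ \emph{uniformly} in $y\in F$ whenever $r<r_{0}'$. This uniform control is precisely where the hypothesis $\mu(F)=0$ is used, together with the closedness of $F$ and the non-atomicity of $\mu$, to ensure that $\mu(B(y,r))$ can be made uniformly small over $y\in F$ rather than only pointwise small. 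Once such a uniform $r_{0}'$ is fixed, the argument of part (\ref{item: s-set unbounded en delta}) applies verbatim at all scales below $r_{0}'$ and yields part (\ref{item: loc s-set en delta}).
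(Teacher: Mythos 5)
Your proposal is correct and takes essentially the same route as the paper: both arguments sandwich each $d$-ball $B(y,r)$ between $\delta$-balls centered at $y$ of radii comparable to $\mu(B(y,r))$ (using $B(y,r)\subseteq B_\delta(y,t)$ for $t>\mu(B(y,r))$ and $B_\delta(y,c_1\mu(B(y,r)))\subseteq B(y,r)$) and then transfer the $s$-set estimate for $\nu$ from $\delta$-balls to $d$-balls, with the unbounded case following from the finiteness of $\mu$ on $d$-balls and the local case from uniform smallness of $\mu(B(y,r))$ over $y\in F$. The only substantive difference is that you re-derive the inner inclusion via doubling, whereas the paper quotes it from Mac\'ias--Segovia; the appeal to $\mu(F)=0$ for the uniform threshold is left at the same level of detail in both.
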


\begin{proof}
By hypothesis, there exist  $c\ge 1$ and $r_0>0$  such that the inequalities
\[
c^{-1}r^s\leq \nu(B_\delta(x,r))\leq cr^s,
 \]
hold for every $x\in F$ and every $0<r< r_0$, where $\nu$ is a Borel measure supported in $F$, and
$r_0=\infty$ in case~(\ref{item: s-set unbounded en delta}).

Fix $x\in F$ and $r>0$.
By definition of $\delta$, we have that $B(x,r)\subseteq B_\delta (x, 2\mu(B(x,r)))$.
Then,
\[
 \nu\left(B(x,r)\right)\le \nu\left( B_\delta (x, 2\mu(B(x,r)))\right) \le c  2^s \mu^s\left(B(x,r)\right)
\]
provided that  $\mu(B(x,r)) <\frac{r_0}{2}$. On the other hand, fix $\ell$ such $3K^2\le 2^\ell$ where
$K$ denotes the triangular constant for $d$.
Then $B_\delta\left( x, A^{- \ell} \mu(B(x, r))\right) \subseteq B(x,r)$ (see \cite[pag. 262]{M-S}),
where $A$ is the constant for the doubling condition for $\mu$. Hence

\[\nu\left( B(x,r)\right)\geq \nu\left( B_\delta\left( A^{- \ell}\mu(B(x, r))  \right) \right)
 \geq c^{-1} A^{- \ell s}\mu^s (B(x, r)),
\]
provided that $ \mu(B(x, r)) <A^{ \ell}r_0$.

Since every $d$-ball has finite $\mu$-measure, (\ref{item: s-set unbounded en delta}) is proved. On the other hand, we obtain
(\ref{item: loc s-set en delta})
 if we can choice $r_1$ in such a way that $0<r<r_1$ implies $\mu(B(x, r)) <\min\{\frac{r_0}{2},A^{ \ell}r_0\}=\frac{r_0}{2}$,
for every $x\in F$.  But this is possible from the hypothesis
 $\mu(F)=0$.
\end{proof}

We shall point out that the assumption $\mu(F)=0$ is natural in many problems related with partial differential equations,
in which $F$ plays the role of the boundary of a domain in a metric measure space $(X,d,\mu)$
 (see
for example  \cite{DuranSanmartinoToschi} or \cite{DuranLopez}).\\

In order to obtain a sufficient condition under which every locally $s$-set in $(X,d,\mu)$
becomes a locally $s$-set in $(X,\delta)$, we shall give the following definition.

\begin{definition}
Let $F$ be a closed subset of $X$. We shall say that $F$ is \textbf{\emph{consistent with}} $\boldsymbol\mu$ if there exists a positive number $R$ such that
\begin{equation*}
\inf_{x\in F}\mu(B(x,R))>0.
\end{equation*}
\end{definition}

Let us remark that if $F$ is a set consistent with  $\mu$, then we have that $\inf_{x\in F}\mu(B(x,r))>0$ for every $r>0$. In fact, the claim is
trivial for every $r\geq R$. On the other hand,  for a fixed $0<r<R$, for every $x\in F$ we have that
\[\mu(B(x,r))=\mu\left(x, \frac{r}{R}R\right)\geq \frac{1}{A^m}\mu(B(x,R)),\]
where $m$ is a positive integer such that $2^m\geq R/r$ and $A$ denotes the doubling constant for $\mu$.\\

We want also to point out that every bounded subset of $X$ is consistent with $\mu$. In fact, set $R=2K\diam(F)$, with $K$ the triangular constant for $d$, and fix $x_0\in F$.
Then $B(x_0,\diam(F))\subseteq B(x,R)$ for every $x\in F$. Then $\inf_{x\in F}\mu(B(x,R))\geq \mu(B(x_0,\diam(F)))>0$, since $\mu$ is doubling.\\

However, there exist also
unbounded sets satisfying this condition.

\begin{example}
 Consider $X=\mathbb R^2$ equipped with the usual distance $d$
and the Lebesgue measure $\lambda$. Fix $a>0$ and set $F=\{(t,0): t\geq a\}$. Then $\lambda(B(x,r))$ is equivalent to $r^2$ for every $x\in F$, thus $F$ is consistent with $\lambda$.
\end{example}

\begin{example}\label{ex: no acotado}
Also we can consider another measure $\mu$ defined on $(\mathbb R^2,d)$ in such a way that $(X,d,\mu)$ is not an Ahlfors space. For example, let us consider
the measure $\mu$ define by
\[\mu(E)=\int_E |y|^{\beta} dy,\]
for a fixed $\beta>-2$. Then  $(X,d,\mu)$ is a space of homogeneous type since $|x|^{\beta}$ is a Muckenhoupt weight (see \cite{Muck} or  \cite{garcia-rubio}). For the set  $F$ considered in the above example,
 it is easy to see that $\mu(B(x,r))$ is equivalent to $r^2 |x|^\beta$ for $x\in F$ and $0<r\leq a/2$.
 So that $F$ is consistent with $\mu$ provided that $\beta>0$.
\end{example}

With this terminology, we have the following result.

\begin{theorem}\label{teo: suficiente para s-set en (X,delta)} \hspace*{\fill}
\begin{enumerate}
\item\label{item: s-set unbounded con mu} If $F$ is an unbounded $s$-set in $(X,d,\mu)$, then $F$ is  $s$-set in $(X,\delta)$.
\item\label{item: loc s-set con mu} If $F$ is a locally $s$-set in $(X,d,\mu)$ which is consistent with $\mu$, then $F$ is locally $s$-set in $(X,\delta)$.
\end{enumerate}
\end{theorem}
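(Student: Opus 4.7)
The plan is to show that the very measure $m$ witnessing the (locally) $s$-set condition on $F$ in $(X,d,\mu)$ also witnesses it in $(X,\delta)$. The strategy is dual to the proof of Theorem~\ref{Teo:s-conjunto implica mu conjunto}: for each $x\in F$ and each admissible $r>0$ one sandwiches
\[
B(x,\rho_1)\subseteq B_\delta(x,r)\subseteq B(x,\rho_2)
\]
using the Mac\'ias--Segovia inclusions already recalled in that proof, with $\rho_1,\rho_2$ chosen so that $\mu(B(x,\rho_i))$ is comparable to $r$ with constants depending only on the doubling constant $A$ and on $\ell$. Applying the hypothesis $c^{-1}\mu(B(x,\rho))^s\le m(B(x,\rho))\le c\mu(B(x,\rho))^s$ at $\rho_1$ and $\rho_2$ then pinches $m(B_\delta(x,r))$ to size $r^s$.

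To produce $\rho_1,\rho_2$ I would set, for each $x\in F$ and $0<t<\mu(X)$,
\[
\rho(x,t):=\sup\{\rho>0:\mu(B(x,\rho))\le t\},
\]
which is positive by non-atomicity and finite because $t<\mu(X)$. Left-continuity of $\rho\mapsto\mu(B(x,\rho))$, which holds since the $d$-balls are open, gives $\mu(B(x,\rho(x,t)))\le t$; the doubling inequality applied at $2\rho(x,t)$ then forces $\mu(B(x,\rho(x,t)))>t/A$. Setting $\rho_1:=\rho(x,r/2)$ and $\rho_2:=2\rho(x,A^\ell r)$ one obtains
\[
\frac{r}{2A}<\mu(B(x,\rho_1))\le\frac{r}{2},\qquad A^\ell r<\mu(B(x,\rho_2))\le A^{\ell+1}r,
\]
and the Mac\'ias--Segovia inclusions applied at these two radii yield exactly the desired sandwich.

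It remains to check that $\rho_1$ and $\rho_2$ lie in the range where the hypothesis on $m$ is valid. For part~(\ref{item: s-set unbounded con mu}) the hypothesis holds at every radius, and the only requirement is $A^\ell r<\mu(X)$; if $\mu(X)=\infty$ this is vacuous, while if $\mu(X)<\infty$ then $\mathrm{diam}_\delta(F)\le\mu(X)$ and the tail $r\in[\mu(X)/A^\ell,\mathrm{diam}_\delta(F))$ is handled directly by $m(B_\delta(x,r))\le m(F)\le c\mu(X)^s\le cA^{\ell s}r^s$. For part~(\ref{item: loc s-set con mu}) the consistency hypothesis together with the remark after its definition supplies $\gamma:=\inf_{x\in F}\mu(B(x,r_0/4))>0$; for $t<\gamma$ left-continuity rules out $\rho(x,t)=r_0/4$, so $\rho(x,t)<r_0/4$ \emph{uniformly in} $x\in F$. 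Taking $r_0':=\gamma/A^\ell$ then forces $\rho_1,\rho_2<r_0/2<r_0$ for $0<r<r_0'$, and the local hypothesis applies. The main obstacle is precisely this uniform bound on $\rho(x,A^\ell r)$: without consistency the radii $\rho_i$ could escape the local window at some points of $F$, and the sandwich in part~(\ref{item: loc s-set con mu}) would break down.
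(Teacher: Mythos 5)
Your argument is correct, and its geometric core is the same as the paper's: both proofs sandwich $B_\delta(x,r)$ between two $d$-balls centered at $x$ whose $\mu$-measures are comparable to $r$, and both use consistency only to keep those two radii inside the window where the $s$-set hypothesis is valid (your uniform bound via $\gamma=\inf_{x\in F}\mu(B(x,r_0/4))>0$ plays exactly the role of Lemma~\ref{lemma: radios chicos}). There are two genuine differences. First, your explicit construction of $\rho_1,\rho_2$ from $\rho(x,t)=\sup\{\rho>0:\mu(B(x,\rho))\le t\}$, using left-continuity of $\rho\mapsto\mu(B(x,\rho))$ and one application of doubling, is in effect a self-contained proof of Lemma~\ref{Lema:Sjodin}, which the paper imports from Sj\"odin without proof. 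Second, and more substantively, you transport the given measure $m$ itself through the sandwich, whereas the paper first replaces $m$ by the restriction of $H^s$ to $F$ via Proposition~\ref{propo: unicidad de la medida} and only then runs the same monotonicity argument. Since $m$ is supported on $F$ and the sandwich uses nothing beyond monotonicity of $m$ and the two-sided bound at the radii $\rho_1,\rho_2$, your shortcut is legitimate; it shows that Theorem~\ref{teo: suficiente para s-set en (X,delta)} does not actually need Proposition~\ref{propo: unicidad de la medida} or the covering machinery of Lemma~\ref{claim: cubriemiento por bolas de medida chica}, at the (cosmetic) cost of not exhibiting $H^s|_F$ as the measure realizing the conclusion. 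One small loose end: in the tail case $r\in[\mu(X)/A^\ell,\diam_\delta(F))$ of part~(\ref{item: s-set unbounded con mu}) you only record the upper bound, while the definition also requires $m(B_\delta(x,r))\gtrsim r^s$; this follows by monotonicity from the already-treated radius $\mu(X)/(2A^\ell)$ together with $r<\diam_\delta(F)\le\mu(X)$, and in any case the whole subcase is vacuous because a doubling measure on an unbounded space has infinite total mass.
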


Let us observe that every bounded $s$-set in $(X,d,\mu)$ satisfies the hypothesis of the above theorem.
In order to prove this theorem, we shall need the following three auxiliary results.\\

The first one states that, as in the case of $s$-sets relative to a distance,
when $F$ is $s$-set relative to the measure $\mu$, there exists essentially only one Borel  measure $\nu$ satisfying the required condition. More precisely, we state the following result that
we shall prove  in Section~\ref{section: proof}.

\begin{proposition}\label{propo: unicidad de la medida}
If
$F$ is (locally) $s$-set in $(X,d,\mu)$ with measure $m$, then $F$ is (locally) $s$-set en $(X,d,\mu)$ with the restriction of  $H^s$ to~$F$,
where $H^s$ denotes the $s$-dimensional Hausdorff measure relative
to $\mu$.
\end{proposition}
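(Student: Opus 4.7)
The plan is to show that the restriction of $H^s$ to $F$ is a Borel measure on $X$, supported in $F$, that satisfies (\ref{eq: normal mu}) with a constant depending only on $c$, on the triangular constant $K$ of $d$, and on the doubling constant $A$ of $\mu$. Since $H^s$ is automatically a Borel measure (as a metric outer measure), I need only fix $x \in F$ and a radius $r$ in the admissible range---$0<r<\diam(F)$ in the global case, or $0<r<r_0$ after a mild reduction in the local case---and bound $H^s(F\cap B(x,r))$ from below and from above by multiples of $\mu(B(x,r))^s$.

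For the lower bound I would start from an arbitrary cover $\{B_i\}=\{B(x_i,r_i)\}$ of $F\cap B(x,r)$ by $d$-balls with $\mu(B_i)\leq \rho$, discard those that do not meet $F$, and pick $y_i\in B_i\cap F$ for each of the remaining ones. The triangle inequality gives $B_i\subseteq B(y_i,2Kr_i)=:\tilde B_i\subseteq B(x_i,K(1+2K)r_i)$, hence by doubling $\mu(\tilde B_i)\leq C_1\mu(B_i)$. The $s$-set property of $m$ at $y_i$ then yields
\[m(B_i)\leq m(\tilde B_i)\leq c\,\mu(\tilde B_i)^s\leq cC_1^s\mu(B_i)^s.\]
Summing, using that $m$ is supported on $F$, and applying the lower bound of (\ref{eq: normal mu}) at $x$ gives $\sum_i\mu(B_i)^s\geq (c^2C_1^s)^{-1}\mu(B(x,r))^s$. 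Taking the infimum over competing covers and letting $\rho\to 0$ produces the lower bound for $H^s(F\cap B(x,r))$.

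For the upper bound I would apply Lemma~\ref{claim: cubriemiento por bolas de medida chica} to the bounded set $F\cap B(x,r)$ to obtain, for each small $\rho>0$, a cover by $d$-balls $\{B_i\}$ centered in $F\cap B(x,r)$, with $\mu(B_i)\leq\rho$, with bounded overlap $\sum_i\chi_{B_i}\leq N$, and with union contained in a controlled enlargement $\widetilde B$ of $B(x,r)$ for which $\mu(\widetilde B)\leq C_2\mu(B(x,r))$. The upper bound of (\ref{eq: normal mu}) applied at each center gives $\mu(B_i)^s\leq c\,m(B_i)$, and combining overlap control with the $s$-set property of $m$ on $\widetilde B$ and doubling produces
\[\sum_i\mu(B_i)^s\leq cN\,m(\widetilde B)\leq c^2N\mu(\widetilde B)^s\leq c^2NC_2^s\mu(B(x,r))^s.\]
Letting $\rho\to 0$ completes the upper estimate.

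The main obstacle is the bookkeeping in the locally $s$-set case: the enlargements $\tilde B_i$ and $\widetilde B$ appearing above must still satisfy (\ref{eq: normal mu}) for $m$, which forces a restriction of the form $r<r_0/C$ for a structural constant $C$, and, for the lower bound, a reduction to covers whose $d$-radii are a priori bounded on the bounded set $F\cap B(x,r)$. Such a reduction follows from the fact that, because $\mu$ is non-atomic and doubling, balls of very small $\mu$-measure whose centers stay in a fixed bounded region must have small $d$-radius, so the infimum defining $H^s_\rho$ can be computed over covers whose $d$-radii are controlled.
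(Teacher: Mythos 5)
Your proposal is correct and follows essentially the same route as the paper: the lower bound comes from testing near-optimal covers against the $s$-set property of $m$ after forcing the covering radii below $r_0$ (which the paper does by taking $\rho<A^{-\ell}\mu(B(x,r))$, a quantitative version of your "small measure implies small radius" reduction), and the upper bound comes from Lemma~\ref{claim: cubriemiento por bolas de medida chica} together with the bounded overlap and the doubling property. The only differences are cosmetic: you are slightly more careful than the paper in recentering the covering balls at points of $F$ for the lower bound, while the paper is more explicit about the remaining range $r_0/(4K^2)\leq r<r_0$ in the local case, which it handles by splitting $B(x,r)$ into boundedly many balls of radius $r_0/(8K^2)$ and using subadditivity of $H^s$.
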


The following statement is about a characterization of consistent sets, and says that the radii
 of all the $d$-balls  centering in a set consistent
with $\mu$  are as small as we want, provided that  the ball has sufficiently small measure.

\begin{lemma}\label{lemma: radios chicos}
$F$ is consistent with $\mu$ if and only if
given $r_0>0$, there exists $C$ such that if $x\in F$ and $\mu(B(x,t))\leq C$, then $t<r_0$.
\end{lemma}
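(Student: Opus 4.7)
The plan is to prove both implications directly, exploiting only the monotonicity of $r\mapsto\mu(B(x,r))$ together with the observation, already recorded in the remark following the definition of consistency, that if $F$ is consistent with $\mu$ then in fact $\inf_{x\in F}\mu(B(x,r))>0$ for \emph{every} $r>0$. The statement is essentially a quantitative reformulation of this observation, so the argument will be short and non-technical.

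For the forward implication, suppose $F$ is consistent with $\mu$ and fix $r_0>0$. By the remark just recalled, the quantity $\alpha:=\inf_{x\in F}\mu(B(x,r_0))$ is strictly positive. I would set, say, $C=\alpha/2>0$. Given $x\in F$ and $t>0$ with $\mu(B(x,t))\le C$, I want to conclude $t<r_0$. If on the contrary $t\ge r_0$, then $B(x,r_0)\subseteq B(x,t)$ and monotonicity of $\mu$ yields $\mu(B(x,t))\ge\mu(B(x,r_0))\ge\alpha>C$, contradicting the hypothesis on $t$. Hence $t<r_0$.

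For the converse, assume the quantitative condition holds. I would take $R=r_0$ for an arbitrary choice of $r_0>0$ (any fixed positive number works) and let $C>0$ be the constant supplied by the hypothesis for this $r_0$. The contrapositive of the assumed implication reads: for every $x\in F$, $t\ge r_0$ forces $\mu(B(x,t))>C$. Applying this with $t=r_0$ gives $\mu(B(x,R))>C$ for every $x\in F$, whence $\inf_{x\in F}\mu(B(x,R))\ge C>0$, so $F$ is consistent with $\mu$.

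There is no real obstacle here; both directions are immediate from monotonicity in the radius and from the fact, already noted in the paper, that consistency propagates from one radius $R$ to all radii. The only thing to be careful about is to introduce the strict gap (the factor $1/2$ in the choice of $C$) in the forward direction so that $\mu(B(x,t))\le C$ strictly undercuts $\mu(B(x,r_0))$, which is what allows the contrapositive of monotonicity to force $t<r_0$ rather than merely $t\le r_0$.
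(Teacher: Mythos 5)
Your proof is correct and takes essentially the same route as the paper's: both directions rest on the remark that consistency at a single radius forces $\inf_{x\in F}\mu(B(x,r))>0$ for every $r>0$, combined with monotonicity of $r\mapsto\mu(B(x,r))$. The paper phrases both implications contrapositively via sequences $x_n$ with $\mu(B(x_n,r_0))\le 1/n$, whereas you give the forward direction constructively with the explicit choice $C=\alpha/2$ (correctly noting that a strict undercut of $\alpha$ is needed), but the mathematical content is identical.
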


\begin{proof}
Suppose first that $F$ is consistent with $\mu$ but the property  is false. Then there exists $r_0>0$ such that for every
natural number $n$ we can find $x_n\in F$ and $t_n\geq r_0$ with $\mu(B(x_n,t_n))\leq \frac 1n$.
So that $\mu(B(x_n,r_0))\leq \frac 1n$ for every natural $n$, which implies that
$\inf_{x\in F}\mu(B(x,r_0))=0$. But this is a contradiction, since $F$ is consistent with $\mu$.
Reciprocally, assume that $F$ is not consistent with $\mu$. Then, for every $r_0>0$ we have that $\inf_{x\in F}\mu(B(x,r_0))=0$.
So that for every natural $n$ there exists $x_n\in F$  such that $\mu(B(x_n,r_0))<\frac 1n$. Hence, given $C>0$ we can choose $n$ such that
$1/n\leq C$ and obtain $\mu(B(x_n,r_0))<C$ but $r_0\nless r_0$.
\end{proof}

The last  result that we shall need is a
technical lemma, which  is showed in \cite{Sjoding}, so that we shall omit its proof.
\begin{lemma}\label{Lema:Sjodin}
Given $x\in X$ and $0<r<2\mu(X)$,
there exist numbers $0<a\leq b<\infty$  such that
\begin{equation*}
 B(x,a)\subseteq B_\delta(x,r)\subseteq B(x,b)
\end{equation*}
and
\begin{equation*}
 C_1 r\leq \mu(B(x,a)) \leq \mu(B(x,b)) \leq C_2 r,
\end{equation*}
where  $C_1$ and $ C_2$  only depend on $X$.
\end{lemma}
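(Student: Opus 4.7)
The strategy is to exploit the explicit description
\[B_\delta(x,r)=\bigcup\{B:B\text{ is a }d\text{-ball with }x\in B\text{ and }\mu(B)<r\}\]
recalled earlier, together with the quasi-triangle inequality and the doubling property of $\mu$, in order to sandwich $B_\delta(x,r)$ between two concentric $d$-balls centred at $x$.

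For the inner containment $B(x,a)\subseteq B_\delta(x,r)$ I would choose $a$ so that the ball $B(x,a)$ itself belongs to the defining family. Since $\mu(\{x\})=0$, the function $t\mapsto\mu(B(x,t))$ tends to $0$ as $t\to 0^+$, and the doubling property forces it to grow by at most a factor $A$ each time $t$ is doubled. The hypothesis $r<2\mu(X)$ implies $r/(2A)<\mu(X)$, so iteratively doubling $t$ from a very small value and picking the first index at which $\mu(B(x,t))$ enters the window $[r/(2A),r/2)$ produces an $a>0$ with
\[\tfrac{r}{2A}\le\mu(B(x,a))<\tfrac{r}{2}<r.\]
Such a ball automatically belongs to the defining union, so $B(x,a)\subseteq B_\delta(x,r)$, and $C_1=1/(2A)$ settles the lower estimate.

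For the outer containment I would take an arbitrary admissible ball $B(y,s)$. Since $x\in B(y,s)$ means $d(x,y)<s$, two uses of the quasi-triangle inequality give
\[B(y,s)\subseteq B(x,2Ks)\subseteq B(y,K(1+2K)s),\]
and the doubling property applied to the right-most inclusion yields $\mu(B(x,2Ks))\le C\,\mu(B(y,s))<Cr$, where $C$ depends only on $K$ and $A$. Setting
\[b:=\sup\{t>0:\mu(B(x,t))\le Cr\},\]
monotonicity forces $2Ks\le b$ for every admissible $B(y,s)$, so $B(y,s)\subseteq B(x,b)$ and taking unions gives $B_\delta(x,r)\subseteq B(x,b)$. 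Continuity from below of $\mu$ on the increasing family $\{B(x,t)\}_{t<b}$ then yields $\mu(B(x,b))\le Cr$, so $C_2=C$ is adequate.

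The main obstacle I foresee is twofold. First, the function $t\mapsto\mu(B(x,t))$ need not be continuous since the $d$-balls are only open, so in constructing the calibrated radius $a$ I would deliberately avoid invoking continuity and argue entirely through the doubling estimate; the hypothesis $r<2\mu(X)$ is precisely what guarantees the doubling iteration does not overshoot the total mass. Second, the finiteness of $b$ is nontrivial when $\mu(X)<\infty$ and $Cr\ge\mu(X)$: in that regime the defining supremum could a priori be infinite, so one must use $d$-boundedness of $X$ (forced by $\mu(X)<\infty$ together with the doubling condition, essentially via the argument that the $\mu$-measure of a doubling ball grows without bound in an unbounded space) to replace the sup by any finite radius for which $B(x,b)=X$.
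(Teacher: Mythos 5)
Your argument is correct. Note that the paper itself gives no proof of this lemma: it explicitly defers to Sj\"odin's paper and omits the argument, so there is nothing internal to compare against; what you have written is a legitimate self-contained proof along the lines one would expect. The two key moves are exactly right: (i) for the inner ball, use non-atomicity to make $\mu(B(x,t))$ small, then double the radius until the measure first lands in $[r/(2A),r/2)$ --- the hypothesis $r<2\mu(X)$ guarantees $\sup_t\mu(B(x,t))=\mu(X)>r/2$, so this window is actually hit, giving $C_1=1/(2A)$; (ii) for the outer ball, every admissible $B(y,s)$ satisfies $B(y,s)\subseteq B(x,2Ks)$ with $\mu(B(x,2Ks))\le A^j\mu(B(y,s))<A^jr$, and monotonicity plus continuity from below pins down $b$ with $\mu(B(x,b))\le A^jr$. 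The only place you argue by gesture rather than in full is the finiteness of $b$ when $\mu(X)<\infty$ and $Cr\ge\mu(X)$: your fallback ($X$ is $d$-bounded, so take $b$ with $B(x,b)=X$ and then $\mu(B(x,b))=\mu(X)\le Cr$ by the very hypothesis of that regime) does work, but the fact that $\mu(X)<\infty$ forces $X$ bounded deserves its own line --- it follows from the standard observation that if $X\setminus B(x,2KR)\ne\emptyset$ one can find a disjoint ball of comparable measure inside $B(x,3K^2R)$, whence $\mu(B(x,3K^2R))\ge(1+c)\mu(B(x,R))$ and iteration gives $\mu(X)=\infty$. With that lemma spelled out the proof is complete.
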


\begin{proof}[Proof of Theorem~\ref{teo: suficiente para s-set en (X,delta)}]
From Proposition~\ref{propo: unicidad de la medida}, there exist  $c\ge 1$ and $r_0>0$ such that
  \begin{equation*}\label{eq: normal mu con hausdorff}
c^{-1}\mu(B(x,r))^s\leq H^s(B(x,r)\cap F)\leq c\mu(B(x,r))^s,
\end{equation*}
for every $x\in F$ and every $0<r<r_0$, where
$r_0=\infty$ in case~(\ref{item: s-set unbounded con mu}).

Fix $x\in F$ and $0<r<2\mu(X)$, and let $a$ and $b$ be
as in Lemma~\ref{Lema:Sjodin}. Then
\[
H^s\left(B_\delta(x,r)\cap F\right)
\leq  H^s\left(B(x,b)\cap F\right)
\leq c  \mu^s\left(B(x,b)\right)
\leq  c C_2^s r^s,
\]
and
\[
H^s\left(B_\delta(x,r)\cap F\right)
\geq  H^s\left(B(x,a)\cap F\right)
\geq c^{-1}  \mu^s\left(B(x,a)\right)
\geq  c^{-1} C_1^s r^s,
\]
provided that $a,b<r_0$. Then (\ref{item: s-set unbounded con mu}) is proved.
On the other hand, (\ref{item: loc s-set con mu}) is showed if we can choice $r_1\leq 2\mu(X)$ such that
$r<r_1$ implies $a,b<r_0$. In order to do this, let $C$ be such that if $x\in F$ and $\mu(B(x,t))\leq C$, then $t<r_0$
(see  Lemma~\ref{lemma: radios chicos}). Let us define $r_1=\min\{2\mu(X),C/C_2\}$, with $C_2$  the constant that
appears in Lemma~\ref{Lema:Sjodin}. Then  $\mu(B(x,a))$ and $\mu(B(x,b))$  are both bounded above by $C$, so that $a,b<r_0$.
\end{proof}

\begin{remark} We want  to point out that the condition  ``$F$ consistent with $\mu$'' in Theorem~\ref{teo: suficiente para s-set en (X,delta)}
is sufficient for a locally $s$-set in $(X,d,\mu)$ to be a locally $s$-set in $(X,\delta)$, but is not necessary.
In fact, let us consider $(X,d,\mu)$ and $F$ as in Example~\ref{ex: no acotado}. Taking
\[\nu(E)=\int_{E\cap F} |s|^{\beta/2} ds\]
as the Borel measure supported on $F$ we can show that $F$ is locally $\frac12$-set in $(X,\delta)$, and from
Theorem~\ref{Teo:s-conjunto implica mu conjunto} we have that $F$ is locally $\frac12$-set in $(X,d,\mu)$.
Nevertheless, it is easy to see that $F=\{(t,0): t\geq a\}$ is not consistent with $\mu$ if $\beta<0$.
\end{remark}

\section{Proof of Proposition~\ref{propo: unicidad de la medida}}\label{section: proof}

In order to prove Proposition~\ref{propo: unicidad de la medida}, we shall use the following covering type lemma that we shall
prove at the end of this section.

\begin{lemma}\label{claim: cubriemiento por bolas de medida chica}
Let $G$ be a bounded subset of $X$. For a given $\rho>0$,
there exists a finite covering $\{B(x_i,r_i), i=1,\dots,I_\rho\}$ of $G$
 by $d$-balls with $x_i\in G$  and $\mu(B(x_i,r_i))<\rho$.  Also, each $y\in X$ belongs to at most $\Lambda$ of such balls, where $\Lambda$ is a geometric
constant  which depends only on $X$.
\end{lemma}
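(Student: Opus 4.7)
The plan is a Vitali-type extraction adapted to the measure $\mu$: I will select disjoint balls whose $\mu$-measures are uniformly comparable to $\rho$, so that their $\kappa$-expansions (with $\kappa=\kappa(K)$ the expansion constant of the standard covering lemma in the quasi-metric space $(X,d)$) form the required covering, and the uniform lower bound on the selected balls yields bounded overlap by a volume-packing argument.

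First I dispatch the trivial case in which $G$ is already contained in a single $d$-ball centered in $G$ of $\mu$-measure less than $\rho$ (this absorbs both $\mu(X)<\rho$ and the subcase $\mu(B(x_0,D))<\rho$ for $x_0\in G$ and some $D\geq\diam(G)$). Otherwise, for each $x\in G$ I set
\[
s(x)=\sup\{r>0:\mu(B(x,r))<\rho\},
\]
which lies in $(0,\infty)$ thanks to non-atomicity and the failure of the trivial case. Putting $r(x):=\tfrac{3}{4\kappa}s(x)$ one has
\[
\mu(B(x,\kappa r(x)))<\rho\leq\mu(B(x,2\kappa r(x))),
\]
and the doubling property applied to the right inequality provides a uniform lower bound $\mu(B(x,r(x)))\geq\rho/A^{k}$ with $k$ depending only on $K$ and $A$. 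I then apply the $\kappa r$-covering lemma to $\{B(x,r(x)):x\in G\}$ to extract a disjoint subfamily $\{B(x_i,r_i)\}_{i\in I}$ with $G\subseteq\bigcup_{i\in I}B(x_i,\kappa r_i)$; the expansions $B(x_i,\kappa r_i)$ are the covering balls required by the statement.

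Finiteness of $I$ is straightforward: boundedness of $G$ together with $\mu(B(x_i,\kappa r_i))<\rho$ forces a uniform upper bound on $r_i$, so the disjoint $B(x_i,r_i)$ sit inside a fixed $d$-ball of finite $\mu$-measure while each one individually has $\mu$-measure at least $\rho/A^{k}$.

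The main step is the bounded overlap. For fixed $y\in X$ let $J=\{i\in I:y\in B(x_i,\kappa r_i)\}$ and pick $i^*\in J$ realizing $r_{i^*}=\max_{i\in J}r_i$. A quasi-triangle estimate yields $\bigcup_{i\in J}B(x_i,r_i)\subseteq B(x_{i^*},C_1 r_{i^*})$ for an explicit $C_1=C_1(K,\kappa)$, and doubling combined with $\mu(B(x_{i^*},\kappa r_{i^*}))<\rho$ gives $\mu(B(x_{i^*},C_1 r_{i^*}))<A^{l}\rho$ for some $l=l(K,\kappa,A)$. Combining these with the disjointness of $\{B(x_i,r_i)\}_{i\in J}$ and $\mu(B(x_i,r_i))\geq\rho/A^{k}$ gives
\[
|J|\,\rho/A^{k}\leq\sum_{i\in J}\mu(B(x_i,r_i))\leq\mu(B(x_{i^*},C_1 r_{i^*}))<A^{l}\rho,
\]
so $|J|<A^{k+l}=:\Lambda$, depending only on $X$. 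I expect this to be the crux of the proof: a naive Vitali selection would permit many small $B(x_j,r_j)$ to accumulate inside an annulus $B(x_i,\kappa r_i)\setminus B(x_i,r_i)$ of a much larger selected ball and so inflate the multiplicity. The maximal choice of $r(x)$ above is precisely what produces the uniform lower bound $\mu(B(x_i,r_i))\geq\rho/A^{k}$ that rescues the packing estimate.
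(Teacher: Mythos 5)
Your proof is correct, but it follows a genuinely different route from the paper. The paper works in the Mac\'ias--Segovia quasi-metric $\delta$: it takes a maximal $t$-disperse set in $G$ with respect to $\delta$ for $t$ comparable to $\rho$, converts each $\delta$-ball $B_\delta(x_i,t)$ into a $d$-ball of radius $2\diam(B_\delta(x_i,t))$, controls the measure of these $d$-balls via an auxiliary lemma comparing $\diam_\delta(E)$ with $\mu(B(x,\diam(E)))$, and bounds the overlap by the finite metric dimension of the normal space $(X,\delta,\mu)$ applied to the $t$-disperse set of centers. You instead stay entirely in $(X,d)$: you choose for each $x\in G$ the essentially maximal radius whose ball still has measure below $\rho$, which by doubling pins $\mu(B(x,r(x)))$ between $\rho/A^{k}$ and $\rho$; a Vitali-type extraction then gives disjoint cores whose expansions cover $G$, and the overlap bound comes from a volume-packing count using disjointness plus the uniform lower bound on the cores. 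Your key observation --- that a naive Vitali selection fails and that maximality of $r(x)$ is what supplies the lower bound $\mu(B(x_i,r_i))\geq \rho/A^{k}$ needed for packing --- is exactly the right crux, and your handling of the degenerate case ($G$ inside a single ball of measure $<\rho$), of the positivity and finiteness of $s(x)$ via non-atomicity and the failure of the trivial case, and of the finiteness of the selected family are all sound. The trade-off: your argument is more self-contained (it needs only doubling, non-atomicity and the basic covering lemma, not the $\delta$-machinery or the $\diam_\delta$ comparison lemma), while the paper's argument fits naturally into its surrounding framework, where $\delta$ and the normality of $(X,\delta,\mu)$ are already in play and the disperse-set count gives the overlap constant almost for free.
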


\begin{remark}\label{rem}
 Notice that if $\rho\leq\mu(G)$, then $r_i\leq \diam(G)$ for every $i$. In fact, let us assume that $r_i>\diam(G)$ for some $i$.
Then $G\subseteq B(x_i,r_i)$, so that $\mu(G)\leq \mu(B(x_i,r_i))<\rho\leq\mu(G)$, which is an absurd.
\end{remark}

\begin{proof}[Proof of Proposition~\ref{propo: unicidad de la medida}]
By hypothesis there exist  $r_0>0$, a constant $c\geq 1$ and
 a Borel measure $m$ supported on $F$ such that
  \begin{equation*}
c^{-1}\mu(B(x,r))^s\leq m(B(x,r))\leq c\mu(B(x,r))^s,
\end{equation*}
for every $x\in F$ and every $0<r<r_0$. Here $r_0$ is infinite if $F$ is an unbounded $s$-set in $(X,d,\mu)$, and is
finite otherwise.

Fix $x\in F$, $0<r<r_0$ and $\varepsilon>0$. For each $\rho>0$, there exists a covering $\{B_i=B(x_i,r_i)\}$ of $B(x,r)\cap  F$ by balls   such that
$\mu(B_i)<\rho$ and
\[
\sum_{i\ge 1}\mu^s(B_i)<{H_\rho}^s(B(x,r)\cap  F) +\varepsilon\leq {H}^s(B(x,r)\cap  F) +\varepsilon.
\]
Choosing an appropriated value of $\rho$, we can also obtain $r_i<r_0$ for every $i$. In fact, take
  $\rho=\mu(B(x,r))/A^\ell$ with $\ell$ an integer such that $2^\ell\geq 3K^2$. Then, since we can assume that each $B(x_i,r_i)$ intersects $B(x,r)$,
  if some $r_i\geq r_0$ then we have that $B(x,r)\subseteq B(x_i,3K^2r_i)$. Hence $\mu(B(x,r))\leq A^\ell \mu(B_i)<\mu(B(x,r))$, which is absurd.
Then we can assume $r_i<r_0$ for every $i$,  and hence
\[
c^{-1} \mu(B(x,r))^s  \leq m(B(x,r))\leq \sum_{i} m(B_i) \leq c\sum_{i} \mu(B_i)^s.
\]
Hence, $c^{-1} \mu(B(x,r))^s <cH^s(B(x,r)\cap  F) +c\varepsilon$ for every $\varepsilon>0$, which proves that
\[H^s(B(x,r)\cap  F)\geq c^{-2} \mu(B(x,r))^s. \]

In order to obtain an upper bound for  $H^s(B(x,r)\cap  F)$, let us first assume that $r<\frac{r_0}{4K^2}$ and we fix $0<\rho<\mu(B(x,r)\cap  F)$.
From Lemma~\ref{claim: cubriemiento por bolas de medida chica},
there exists a finite covering $\{B(x_i,r_i), i=1,\dots,I_\rho\}$ of $B(x,r)\cap  F$
 by $d$-balls satisfying  $\mu(B(x_i,r_i))<\rho$, $x_i\in F$ and $r_i\leq 2Kr$. Also, each $y\in X$ belongs to at most $\Lambda$ of such balls, where $\Lambda$ is a geometric
constant  which does not depend on
$\rho$, $r$ or $x$.
 So, we have that
\begin{align*}
    H_{\rho}^s(B(x,r)\cap  F)&\leq \sum_{i=1}^{I_\rho} \mu(B(x_i,r_i))^s
\\&
\leq c \sum_{i=1}^{I_\rho} m\left(B(x_i,r_i)\right)\\
   &\leq c \Lambda m\left(\bigcup_{i=1}^{I_\rho} B(x_i,r_i)\right)
\\ &
\leq c \Lambda  m  \left(B(x,4K^2r)\right)
\\&
\leq c^2 \Lambda  \mu(B(x,4K^2r))^s
\\&
= \tilde C \mu(B(x,r))^s,
 \end{align*}
with $\tilde C=c^2 \Lambda A^j$, where $j$ is a positive integer such that $2^{j-2}\geq K^2$. Taking $\rho\to 0$ we obtain the desired result for this case.

Finally, if $r_0$ is finite, we shall consider the case $\frac{r_0}{4K^2}\leq r<r_0$. In this case, since
 $B(x,r)$ is bounded,  there exists a finite $r_0 (8K^2)^{-1}$-disperse maximal set in $B(x,r)$, let us say $U=\{x_1,\dots, x_{I}\}$, with $I\leq N^{2+\log_2K}$.
Then $B(x,r)\cap F\subseteq \bigcup_{i=1}^{I} B\left(x_i,\frac{r_0}{8K^2}\right)$, and applying the previous case we obtain
 \[H^s(B(x,r)\cap F)\leq \sum_{i=1}^{I}  H^s\left(
 B\left(x_i,\frac{r_0}{8K^2}\right)\cap F\right)\leq \tilde C I  \mu\left(B\left(x,2Kr\right)\right)^s,  \]
and the result follows from the doubling property of $\mu$.
\end{proof}

For the proof of
Lemma~\ref{claim: cubriemiento por bolas de medida chica}, we shall use the next result
about the behavior of  $\delta$-diameter $\diam_\delta(E):=\sup\{\delta(y,w):y,w\in E\}$ of a bounded set $E$.

\begin{lemma}\label{lemma: mu es como el diametro}
Let $E$ be a bounded subset of $X$. For $B=B(x,\diam(E))$ and $x\in E$ we have
\[A^{-\ell}\mu(B)\leq\diam_\delta(E)\leq A\mu(B),\]
where $A$ is the doubling constant for $\mu$ and $\ell$ is a positive integer satisfying $\ell\geq \log_2(8K^3)$, with  $K$ the triangular constant for $d$.
\end{lemma}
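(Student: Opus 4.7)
The plan is to establish the upper and lower bounds in $A^{-\ell}\mu(B) \le \diam_\delta(E) \le A\mu(B)$ separately; the case $\diam(E)=0$ being trivial, I assume $\diam(E)>0$ below.

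For the upper bound, I would fix arbitrary $y,w\in E$. Since $x\in E$, we have $d(x,y)\le \diam(E)$ and $d(x,w)\le\diam(E)$, so both points lie in the open $d$-ball $B(x,2\diam(E))$. This ball is therefore a valid competitor in the infimum defining $\delta(y,w)$, giving
\[
\delta(y,w)\le \mu(B(x,2\diam(E)))\le A\mu(B(x,\diam(E)))=A\mu(B),
\]
by a single application of the doubling condition. Taking the supremum over $y,w\in E$ yields $\diam_\delta(E)\le A\mu(B)$.

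For the lower bound, I would pick a pair $y,w\in E$ with $d(y,w)\ge \alpha\,\diam(E)$ for a constant $\alpha\in(0,1)$ to be fixed later; such a pair exists from the definition of $\diam(E)$. The strategy is to show that every $d$-ball $B'=B(z,r)$ containing both $y$ and $w$ satisfies the inclusion $B\subseteq B(z,2^\ell r)$, so that iterated doubling yields $\mu(B)\le A^\ell\mu(B')$, i.e.\ $\mu(B')\ge A^{-\ell}\mu(B)$; passing to the infimum then gives $\delta(y,w)\ge A^{-\ell}\mu(B)$, and a fortiori $\diam_\delta(E)\ge A^{-\ell}\mu(B)$. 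The inclusion is obtained by successive applications of the quasi-triangle inequality: first, $y,w\in B'$ together with $d(y,w)\ge \alpha\,\diam(E)$ forces $d(y,w)<2Kr$, and hence $\diam(E)<(2K/\alpha)r$; next, $d(x,z)\le K(\diam(E)+r)$; finally, for any $u\in B$, writing $d(u,z)\le K(d(u,x)+d(x,z))$ and substituting the previous two estimates gives $d(u,z)<c\,r$ with $c=(K+K^2)(2K/\alpha)+K^2$.

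The main technical obstacle is calibrating $\alpha$ so that the geometric constant $c$ does not exceed $2^\ell=8K^3$. Taking $\alpha=4/7$ does the job: it produces $c=(7/2)(K^2+K^3)+K^2$, which is bounded by $8K^3$ uniformly in $K\ge 1$ (reducing to $(9/2)K^2\le(9/2)K^3$, i.e.\ $K^2\le K^3$). With this choice, the inclusion $B\subseteq B(z,2^\ell r)$ holds for every $d$-ball $B'=B(z,r)$ containing the selected pair $y,w$, and the proof is complete.
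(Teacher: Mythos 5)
Your proof is correct and follows essentially the same route as the paper's: the upper bound is identical, and for the lower bound both arguments select a nearly diametral pair $y,w\in E$ and show that any $d$-ball containing it must, after dilation by a factor at most $8K^3\le 2^\ell$, contain $B$, so that iterated doubling gives the bound. The only differences are cosmetic: the paper works with an $\varepsilon$-almost-optimal ball and the threshold $\diam(E)<2d(y_0,w_0)$ (with which its final numerical step actually produces $8K^3+K^2$ rather than $8K^3$), whereas your quantifying over all admissible balls and calibrating $\alpha=4/7$ lands the constant cleanly below $8K^3$.
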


\begin{proof}
Let us fix $x\in E$, and let $y$ and $w$ any two points in $E$. Since $y,w\in B(x,2\diam(E))$, from the definition of $\delta$ follows that $\delta(y,w)\leq \mu(B_d(x,2\diam(E)))\leq A\mu(B)$. Taking supreme the upper bound for $\diam_\delta(E)$ is obtained.

For the lower bound, let $y_0,w_0\in E$ such that $\diam(E)<2d(y_0,w_0)$. For a given $\varepsilon>0$, let $B(x_0,r_0)$ be a ball containing  $y_0$ and $w_0$ such that $\mu(B(x_0,r_0))<\delta(y_0,w_0)+\varepsilon$. We claim that $B\subseteq B(x_0,8K^3r_0)$. Assuming this fact true we have that
\[\diam_\delta(F)\geq \delta(y_0,w_0)> \mu(B(x_0,r_0))-\varepsilon
\geq  A^{-\ell}\mu(B)-\varepsilon.\]
By letting $\varepsilon$ tends to zero we obtain the result. Only remains to prove the claim, for which fix $z\in B$. Then
\begin{align*}
  d(z,x_0)&\leq K^2[d(x,x)+d(x,w_0)+d(w_0,x_0)]\\
  &< K^2[2\diam(E)+r_0] \\
  &< K^2[4d(y_0,w_0)+r_0]\\
  &<K^2[4K(d(y_0,x_0)+d(x_0,w_0))+r_0]\\
  &<8K^3r_0,
\end{align*}
and the lemma is proved.
\end{proof}

\begin{proof}[Proof of Lemma~\ref{claim: cubriemiento por bolas de medida chica}]
 Let us denote $\tilde K$ the triangular constant for $\delta$ and $\tilde N$ the constant for the finite
metric dimension of $(X,\delta,\mu)$.
Given $\rho>0$, let
$t=\frac{\rho}{4\tilde K A^{\ell+1}}$, with $\ell$ as in Lemma~\ref{lemma: mu es como el diametro}. Set $U=\{x_1,\dots,x_{I_t}\}$ a finite
$t$-disperse maximal set in $G$ with respect to the quasi-metric $\delta$. So that $\{B_\delta(x_i,t)\}$ is a covering of $G$.
Let us define $B_i=B(x_i,r_i)$, with  $r_i=2\diam (B_\delta(x_i,t))$. Let us first check that $\{B_i\}$ is covering of $G$. In fact, if $y\in G$ then there exists
$i$ such that $y\in B_\delta(x_i,t)$. Then \[d(x_i,y)\leq \diam\left(B_\delta(x_i,t)\right)<2\diam\left(B_\delta(x_i,t)\right),\] so that $y\in B_i$.
In order to estimate the measure of each $B_i$,
 using Lemma~\ref{lemma: mu es como el diametro} with $E=B_\delta(x_i,t)$ we obtain
 \[\mu(B_i)\leq A\mu \left(B(x_i,\diam (B_\delta(x_i,t)))\right)\leq A^{\ell+1}\diam_\delta(B_\delta(x_i,t))\leq A^{\ell+1} 2\tilde K t.\]
From the choice of $t$, we have $\mu(B_i)<\rho$. So that it only remains to prove that we can control
the overlapping of this balls by a geometric constant~$\Lambda$. In fact, for a fixed $y\in X$ we have that  if $y\in B(x_i,r_i)$, then
$B(y,r_i)\subseteq B(x_i,2Kr_i)$. So that $\mu(B(y,r_i))\leq A^p \rho$, with $p$ and integer such that $2^{p-1}\geq K$, and then
\[x_i\in B(y,r_i)\subseteq B_\delta(y,2\mu(B(y,r_i)))\subseteq B_\delta(y,2A^p\rho)=B_\delta(y,8\tilde KA^{\ell+p+1}t).\]
Hence, the number of balls $B(x_i,r_i)$ to which $y$ belongs
is less than or equal to the cardinal of $U \cap B_\delta(y, 2^mt)$, with $m$ a natural number such that $2^m\geq 8\tilde KA^{\ell+p+1}$.
Since  $U$ is $t$-disperse with respect to $\delta$, we have that $\Lambda\leq \tilde N^m$
and the lemma is proved.
\end{proof}

\def\cprime{$'$}

\end{document}